\theoremstyle{plain}
\newtheorem{theorem}{Theorem}
\newtheorem{lemma}{Lemma}
\theoremstyle{proof}
\theoremstyle{definition}
\theoremstyle{remark}
\newtheorem{remark}{Remark}
\theoremstyle{lamma}
\numberwithin{equation}{section}
\numberwithin{lemma}{section}
\numberwithin{theorem}{section}
\theoremstyle{thmrm}
\newenvironment{dedication}
    {\vspace{3ex}\begin{quotation}\begin{center}\begin{em}}
    {\par\end{em}\end{center}\end{quotation}}
\begin{document}
\title[Pell-type equations]{Pell-type equations and class number of the maximal real subfield of a cyclotomic field}
\author{Azizul Hoque and Kalyan Chakraborty}
\address{Azizul Hoque @Azizul Hoque, Harish-Chandra Research Institute, HBNI,
Chhatnag Road, Jhunsi,  Allahabad 211 019, India.}
\email{ azizulhoque@hri.res.in}
\address{Kalyan Chakraborty @Kalyan Chakraborty, Harish-Chandra Research Institute, HBNI, Chhatnag Road, Jhunsi, Allahabad 211 019, India.}
\email{kalyan@hri.res.in}
\keywords{Diophantine equation, Real quadratic fields, Maximal real subfield of cyclotomic field, Class number}
\subjclass[2010] {Primary: 11D09, 11R29, Secondary: 11R11, 11R18}
\maketitle
\begin{dedication}
To Prof. S.D. Adhikari on his $60^{th}$ birthday with great respect and friendship
\end{dedication}
\begin{abstract}
We investigate the solvability of the Diophantine equation $x^2-my^2=\pm p$ in  integers for certain integer $m$ and prime $p$. Then we apply these results to produce family of maximal real subfield of a cyclotomic field whose class number is strictly bigger than $1$.
\end{abstract}
\section{Introduction}
 Let 
$$
f(x, y)= a_nx^n + a_{n-1}x^{n-1}y+.........+ a_1x y^{n-1} + 
a_0y^n
$$
be a homogeneous polynomial ($n\geq 3$).  In 1909,  Thue proved that for a non-zero integer $r$, the Diophantine equation $f(x, y) = r$ either has no solution or at most  finitely many integer solutions. In contrast to this situation, we consider a  degree $2$ homogeneous polynomial 
\begin{equation}\label{main}
x^2-my^2=r
\end{equation} 
where $m$ is a positive square-free integer. This equation
has attracted the attention of many researchers. In particular (\ref{main}) for $r=1$, 

\begin{equation}\label{pell}
x^2-my^2=1
\end{equation}
is the well known Pell equation and it has rich literature. 
Lagrange (see, [\cite{DI}, p. 358]) used the simple continued fraction expansion of $\sqrt{m}$ to show that (\ref{pell}) is solvable in integers $x$ and $y$ with $y\ne 0$. On the other hand, it is well-known that  
the negative Pell equation 
\begin{equation}\label{npell}
x^2-my^2=-1
\end{equation}
has integer solutions
if and only if $\mathit{l}(\sqrt{m})\equiv 1\pmod 2$, where $\mathit{l}(\sqrt{m})$ is the length of the period of the continued fraction of $\sqrt{m}$. 

 In 1768, Lagrange (see, [\cite{SE}, Oeuvres II, pp. 377-535]) developed a recursive method for solving (\ref{main}) with $gcd(x, y)=1$, thereby reducing the problem to the case where $|r|<\sqrt{m}$, in which the solutions $(x, y)$ in positive integers might be found among $(x_n, y_n)$, with $\frac{x_n}{y_n}$ a convergent of the simple continued fraction for $\sqrt{m}$. Later, he (see, [\cite{SE}, Oeuvres II, pp. 655-726]) developed another algorithm that considered as a generalization of the method of solving (\ref{pell}) and (\ref{npell}) using the simple continued fraction. This algorithm was further simplified by Matthews \cite{MA} which is again based on simple continued fractions.
Kaplan and Williams \cite{KW} gave  necessary and sufficient conditions for the solvability of (\ref{npell}) based on the solvability of (\ref{main}) when $r=-4$ in positive coprime integers. Sawilla et al. \cite{SSW} developed a method faster than that of Lagrange to deal with a general binary quadratic Diophantine equation.  

One of our goal is to discuss the solvability of (\ref{main}) for certain 
values of $m$ and $r$. More precisely,  we show for $r=\pm p$ that (\ref{main}) has no integer solutions if 
\begin{equation*}
m=
\begin{cases}
(2np)^2-1,\\
(2np)^2+3 \text{ with } 3|n,\\
((2n+1)p)^2+2,
\end{cases}
\end{equation*}
where $p$ is a prime and $n$ is a positive integer.
We also show that for $m=((2n+1)p)^2-2$ with $p$ a prime and $n$ a positive integer, $x^2-my^2=p$ has no integer solution. Further for $m=((2n+1)p)^2-2$ with $p$ an odd prime and $n$ a positive integer, $x^2-my^2=-p$ has only two solutions $(2,1)$ and $(5,2)$. These two solutions occur only when $p=3$. 

We now turn to a brief introduction of the non-triviality of class groups of the maximal real subfields of certain cyclotomic fields. Let $m$ be a positive integer and $\zeta_m$ be a primitive $m$-th root of unity. Then the field $K_m=\mathbb{Q}(\zeta_m + \zeta^{-1} _m)$ is the maximal real subfield of the cyclotomic field $\mathbb{Q}(\zeta_m)$. We denote by $\mathcal{H}(m)$ the class-number of the field $K_m$ and by $h(m)$ the class-number of the quadratic field $k_m= \mathbb{Q}(\sqrt{m})$. Ankeny et al. \cite{ACH} showed that $\mathcal{H}(p)>1$ when $p=(2n q)^2+1$ is a prime,  $q$ is also a prime and $n>1$ is an integer. Lang \cite{LA} also showed $\mathcal{H}(p)>1$ for any prime of the form $p=\lbrace (2n+1)q \rbrace ^2 + 4$, where $q$ is a prime and $n\geq 1$ is an integer. In 1987, Osada \cite{OS} generalized both the results. More precisely, he proved that $\mathcal{H}(m)>1$ if $m=(2n q)^2 +1$ is a square-free integer, where $q$ is a prime and $n$ is a positive integer such that $n\neq 1, q$. In the same paper he also proved that if $m=\lbrace (2n+1)q\rbrace ^2 +4$ is a square-free integer, where $q$ is a prime and $n$ is a positive integer such that $n\neq q$, then $\mathcal{H}(m)>1$. All these results have been obtained when $m\equiv 1 \pmod 4$.
On the other hand, Takeuchi \cite{TA} established  similar results for certain primes of the form $p\equiv 3 \pmod 4$. He proved that if both $12m+7$ and $p=\{3(8m+5)\}^2-2$ are primes, where $m\geq 0$ is an integer, then $\mathcal{H}(4p)>1$. He also proved that if both $12m+11$ and $p=\{3(8m+7)\}^2-2$ are primes, where $m\geq 0$ is an integer, then $\mathcal{H}(4p)>1$. Along the same line, Hoque and Saikia \cite{HS} proved that if $m=\{3(8g+5)\}^2-2$ is a square-free integer, where $g$ is a positive integer, then $\mathcal{H}(4m)>1$. They also showed that for any square-free integer $m=\lbrace 3(8g+7)\rbrace^2-2$, where $g$ is a positive integer, then $\mathcal{H}(4m)>1$. 

Our final goal here is to derive the non-triviality of class groups of the maximal real subfields of certain cyclotomic fields as an application of the results established in \S 2 regarding the non-solvability of the Pell-type equations. More precisely, we prove $\mathcal{H}(4m)>1$ for the following classes
\begin{equation*}
m=
\begin{cases}
(2np)^2-1 \text{ with } p\equiv 1\pmod 4,\\
(2np)^2+3 \text{ with } p\equiv \pm 1 \pmod 4,\\
((2n+1)p)^2+2 \text{ with } p\equiv \pm 1 \pmod 8,\\
((2n+1)p)^2-2 \text{ with } p\equiv 1, 3 \pmod 8,\\
\end{cases}
\end{equation*}
where $n$ is a positive integer and $p$ is a prime. To prove these results, we first show that $h(m)>1$ by using the precise results on Pell-type equations derived in \S 2, and then we use a result of Yamaguchi\cite{YA} which says that $h(m)$ is a divisor of $\mathcal{H}(4m)$. 

In the concluding section, we provide some examples illustrating results. We have used SAGE 6.2 for these computations.

\section{Pell-type equations}
We consider the Pell-type equation
\begin{equation}\label{eq2.1}
 x^2-my^2=\pm p.
 \end{equation} 
For convenience we treat (\ref{eq2.1}) separately in 
\begin{equation}\label{eq2.2}
  x^2-my^2=p
 \end{equation}
 and
 \begin{equation}\label{eq2.3}
  x^2-my^2=-p.
 \end{equation}
Throughout $p$ will be a prime, $m$ will be a positive square-free integer and $n \geq 1$ will be an integer unless otherwise specified.
\begin{theorem}\label{thm2.1}
The equation
$$
x^2 - my^2 =\pm p
$$
has no integer solutions for $m = (2np)^2 - 1$.
\end{theorem}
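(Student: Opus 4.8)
The plan is to reduce the claim to a statement about the simple continued fraction of $\sqrt m$ and then read off the admissible ``small'' values of $x^2-my^2$. First I would observe that it suffices to rule out solutions in positive integers: if $(x,y)$ solves $x^2-my^2=\pm p$ then so does $(|x|,|y|)$, and neither $x=0$ nor $y=0$ can occur (the former forces $my^2=\mp p$, impossible as $m=(2np)^2-1>p$; the latter forces $x^2=\pm p$). Moreover any solution is automatically primitive: if $d=\gcd(x,y)$ then $d^2\mid x^2-my^2=\pm p$, and since $p$ is prime this gives $d=1$. Finally I would record the size estimate $p<\sqrt m$, which holds because $m-p^2=(4n^2-1)p^2-1>0$ for $n\ge 1$.

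The heart of the argument is the continued fraction expansion of $\sqrt m$. Writing $k=2np$, so that $m=k^2-1$, I would use the classical expansion $\sqrt{k^2-1}=[\,k-1;\overline{1,\,2k-2}\,]$, whose period has length $2$. Running the standard $(P_i,Q_i)$ recursion I would compute the norms $p_i^2-mq_i^2$ of the convergents $p_i/q_i$ and show that, over a full period, they take only the two values $1$ and $-(2k-2)$. I would then invoke the standard theorem that a primitive positive solution of $x^2-my^2=N$ with $|N|<\sqrt m$ must have $x/y$ equal to a convergent of $\sqrt m$. Combining this with the previous paragraph forces $\pm p\in\{1,\,-(2k-2)\}$.

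To finish I would rule out both possibilities. The value $1$ is excluded since $p\ge 2$. The value $-(2k-2)$ would require $p=2k-2=2(2np-1)$, which is impossible because $2(2np-1)\ge 4p-2>p$; alternatively one notes $2k-2>k>\sqrt m>p$, so this norm never occurs among solutions of size below $\sqrt m$ in the first place. The main obstacle I anticipate is the bookkeeping in the second paragraph: correctly establishing the period-two expansion of $\sqrt{k^2-1}$ and verifying that the convergent norms are exactly $\{1,\,-(2k-2)\}$, since everything then collapses quickly. It is worth noting that a purely congruential approach (reducing modulo $p$, where $m\equiv-1$ gives $x^2+y^2\equiv 0\pmod p$) settles only the case $p\equiv 3\pmod 4$, which is why I would commit to the continued-fraction method for a uniform treatment.
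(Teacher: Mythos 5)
Your proposal is correct, but it takes a genuinely different route from the paper. The paper fixes a solution $(x_0,y_0)$ with $y_0\geq 1$ minimal, multiplies the norm equation $N(x_0-y_0\sqrt{m})=\pm p$ by the norm of the fundamental unit $\epsilon=2np+\sqrt{m}$ (which has norm $1$ since $m=(2np)^2-1$ is of Richaud--Degert type), and uses the minimality of $y_0$ to force $y_0\leq |x_0-2npy_0|$; each of the two resulting inequalities then contradicts the size of $p$. Your argument instead computes the period-two expansion $\sqrt{k^2-1}=[\,k-1;\overline{1,\,2k-2}\,]$ with $k=2np$, checks that the convergents realize only the norms $1$ and $-(2k-2)$, and invokes the classical theorem that a primitive solution of $x^2-my^2=N$ with $|N|<\sqrt{m}$ must come from a convergent; your preliminary reductions (primitivity because $d^2\mid p$, the bound $p<\sqrt{m}$, and the exclusion of $x=0$ or $y=0$) are all sound, and the final elimination of the values $1$ and $-(2k-2)$ is immediate. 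The two methods are close cousins -- the unit $k+\sqrt{m}$ the paper multiplies by is exactly the convergent $p_1/q_1=k/1$ of norm $1$ in your expansion -- but yours buys more: it classifies \emph{all} values $N$ with $|N|<\sqrt{m}$ that are primitively represented (namely only $\pm 1$ up to the parity bookkeeping), whereas the paper's descent must be rerun separately for $+p$ and $-p$ and reproves the convergent criterion implicitly via the minimality inequality. The paper's approach, on the other hand, needs no continued-fraction theorem as a black box, only the explicit fundamental unit.
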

\begin{proof}
Let us suppose that (\ref{eq2.2}) has integer solution(s) and without loss of generality we assume that $(x_0, y_0)$ is the least possible of such solutions with $y_0\geq 1$. Thus
\begin{equation}\label{eq2.4}
x_0^2 - my_0^2 = p.
\end{equation}
In the norm form (\ref{eq2.4}) is 
\begin{equation}\label{eq2.5}
p = N(x_0-y_0\sqrt{m}).
\end{equation}
We now multiply (\ref{eq2.5}) with the norm of the fundamental unit 
$$
\epsilon = 2np + \sqrt{m}
$$ 
in the field $\mathbb{Q}(\sqrt{m})$ and obtain
$$
p = (2np x_0 - my_0)^2 - (x_0 - 2np y_0)^2m.
$$
Now  
$$
y_0\leq |x_0 - 2np y_0|
$$
by the minimality of $y_0$. If $y_0\leq x_0 - 2np y_0$, then $(2np + 1)y_0\leq x_0$ and  therefore (\ref{eq2.4}) implies that
$$
p\geq \{(2np+1)^2-m\}y_0^2 = (4np+2)y_0^2.
$$
This is a contradiction as $n\geq 1$.

Again if $y_0\leq 2np y_0- x_0$, then $x_0\leq (2np-1)y_0$ and therefore (\ref{eq2.4}) implies 
$$
p\leq \{(2np-1)^2-m\}y_0^2=-(4np-2)y_0^2.
$$
This again leads to a contradiction as $n\geq 1$. Therefore (\ref{eq2.2}) has no integer solutions.  

We now suppose that (\ref{eq2.3}) has solutions in integers and again we can assume that $x_0$ is an integer and $y_0\geq 1$ is the least positive integer such that 
\begin{equation}\label{eq2.6}
x_0^2 - my_0^2 = - p.
\end{equation}
In the norm form (\ref{eq2.6}) is
\begin{equation}\label{eq2.7}
- p = N(x_0-y_0\sqrt{m}).
\end{equation}
As before multiplying (\ref{eq2.7}) with the norm of the fundamental unit $$\epsilon = 2np + \sqrt{m}$$ in the field $\mathbb{Q}(\sqrt{m})$, we obtain
$$
- p = (2np x_0-m y_0)^2-(x_0-2npy_0)^2m.
$$
By the minimality of $y_0$, we have 
$$
y_0\leq |x_0-2npy_0|.
$$
If $y_0\leq x_0-2np y_0$, then $(2np+1)y_0\leq x_0$ and therefore (\ref{eq2.6}) implies 
$$
p\leq my_0^2-(2np+1)^2y_0^2 = -(4np+2)y_0^2.
$$
This is not possible.

Again if $y_0\leq 2np y_0- x_0$, then $x_0\leq (2np-1)y_0$ and thus (\ref{eq2.6}) implies 
$$p\geq \{m-(2np-1)^2\}y_0^2=4np y_0^2.$$
This leads to a contradiction as $n\geq 1$ and therefore (\ref{eq2.3}) has no integer solutions. 
\end{proof}

\begin{theorem}\label{thm2.2}
The equation
$$
x^2 - my^2 =\pm p
$$
has no integer solutions for $m = (2np)^2 + 3$ when $n$  is a multiple of $3$.
\end{theorem}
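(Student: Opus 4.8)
The plan is to mirror the descent used in Theorem \ref{thm2.1}, but with the unit appropriate to $m=(2np)^2+3$. Writing $a=2np$, the hypothesis $3\mid n$ forces $3\mid a$, hence $3\mid m$; this divisibility is exactly what makes
\[
\epsilon=\frac{(a+\sqrt m)^2}{3}=\frac{(2a^2+3)+2a\sqrt m}{3}=u+v\sqrt m,\qquad u=\frac{2a^2+3}{3},\ v=\frac{2a}{3},
\]
an algebraic integer. A direct check gives $N(\epsilon)=u^2-mv^2=1$, so $\epsilon$ is a unit (in fact the fundamental one, arising from the period-two expansion $\sqrt m=[a;\overline{2a/3,\,2a}]$). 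The two quantities that organize everything are the thresholds $(u-1)/v=a$ and $(u+1)/v=m/a$.

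First I would treat $x^2-my^2=p$. Assume a solution exists and pick $(x_0,y_0)$ with $y_0\ge 1$ minimal and $x_0>0$. Multiplying $x_0-y_0\sqrt m$ by $\epsilon$ produces a new solution whose $\sqrt m$-coefficient is $vx_0-uy_0$; this cannot vanish (else $y_0^2=v^2p$, impossible for prime $p$), so minimality gives $|vx_0-uy_0|\ge y_0$. In the sub-case $vx_0-uy_0\le -y_0$ one gets $x_0\le (u-1)y_0/v=ay_0$, which is absurd since $x_0^2=p+my_0^2>a^2y_0^2$.

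The hard part will be the complementary sub-case $vx_0-uy_0\ge y_0$, where the crude bound only yields $x_0\ge (m/a)y_0>ay_0$, i.e. $x_0\ge ay_0+1$, together with (after squaring the unit inequality and using $u^2-mv^2=1$) the estimate $p>3y_0^2$. The point is to combine the two: from $x_0^2=p+a^2y_0^2+3y_0^2$ and $x_0^2\ge a^2y_0^2+2ay_0+1$ one gets $2ay_0\le p+3y_0^2-1<2p$, whence $a<p$, contradicting $a=2np\ge 6p$ (recall $n\ge 3$ as $3\mid n$). For $x^2-my^2=-p$ the same multiplication gives $|vx_0-uy_0|\ge y_0$, but the roles reverse: the sub-case $vx_0-uy_0\ge y_0$ is killed outright, since squaring and substituting $x_0^2=my_0^2-p$ yields $-2(u+1)y_0^2\ge v^2p$, a negative quantity bounding a positive one. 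The sub-case $vx_0-uy_0\le -y_0$ gives $x_0\le ay_0$, hence $3y_0^2\le p$; the two-sided estimate (exactly as before, now with $x_0\le ay_0-1$) then forces $2ay_0<p$ and again $a<p$.

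One genuine subtlety I would flag explicitly: the minus equation has the boundary solution $(x,y)=(2np,1)$ precisely when $p=3$ (there $x_0=ay_0$ and $3y_0^2=p$ are attained simultaneously), so the statement must be read with $p\neq 3$. This is natural, since $p=3$ forces $3\mid m$ regardless of $n$ and thus falls outside the intended scope of the hypothesis $3\mid n$; moreover it is only with $p\neq 3$ that the inequality $3y_0^2\le p$ becomes strict, which is what powers the refinement $x_0\le ay_0-1$. Beyond this, the only computational care required is the repeated use of $u^2-mv^2=1$ and of the two clean thresholds $a$ and $m/a$.
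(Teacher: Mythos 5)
Your argument is correct for $p\neq 3$ and is, if anything, tighter than the paper's. Both proofs run on the same engine --- multiply the minimal solution by the norm-one unit $\epsilon=\frac{(2d^2+3)+2d\sqrt{m}}{3}$ (your $a$ is the paper's $d$) and invoke minimality of $y_0$ --- but you close the sub-cases differently. The paper first runs a separate factorization argument with $u=x_0+dy_0$, $v=x_0-dy_0$ and $(u+1)(v-1)\geq 0$ to bound $p$ against $y_0$ (e.g.\ $p<2y_0$ in the $+p$ case, and the split $p\gtrless 3y_0^2$ in the $-p$ case), and only then plays those bounds off against the descent inequality. You dispense with that step and instead exploit the integrality gap $x_0\geq ay_0+1$ (resp.\ $x_0\leq ay_0-1$) coming from the thresholds $(u\pm 1)/v=a,\,m/a$, combined directly with $x_0^2=\pm p+(a^2+3)y_0^2$ and with $a=2np\geq 6p$. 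This is shorter, and it forces you to confront the boundary $x_0=ay_0$ honestly rather than letting it fall between two strict inequalities.

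That boundary is where you have caught something real: the theorem as stated is false for $p=3$. Indeed $(x,y)=(2np,1)$ satisfies $x^2-my^2=-3$ for $m=(6n)^2+3$, and square-free instances exist (e.g.\ $n=3$, $m=327=3\cdot 109$, $18^2-327=-3$). The paper's treatment of $x^2-my^2=-p$ splits into $p>3y_0^2$ and $p<3y_0^2$ and silently omits $p=3y_0^2$, which for prime $p$ is exactly $p=3$, $y_0=1$ --- precisely your boundary solution. So your restriction to $p\neq 3$ is not a defect of your proof but a needed correction to the statement (downstream, Theorem \ref{thm3.2} is also affected at $p=3$, where $3$ ramifies rather than splits since $3\mid m$). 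One minor economy: you never need $\epsilon$ to be the \emph{fundamental} unit --- any norm-one unit feeds the minimality argument --- so the continued-fraction justification can be dropped.
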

\begin{proof}
Let  $d = 2np$ and so $m = d^2 + 3$.
Suppose that (\ref{eq2.2}) has an integer solution and without loss of generality we assume that $(x_0, y_0)$ is the least positive integer solution of (\ref{eq2.2}). Then we get (\ref{eq2.4}) which can be re-written as 
$$
x_0^2-(dy_0)^2=p+3y_0^2
$$ 
and therefore
$$
(x_0+dy_0)(x_0-dy_0)>0.
$$
Let $u=x_0 + d y_0$ and $v=x_0 - d y_0$. Then both $u$ and $v$ are positive  with $(u + 1)(v -1)\geq 0$ and hence 
\begin{equation}\label{eq2.8}
uv - 1\geq u - v .
\end{equation}
Furthermore $d = \frac{u-v}{2y_0}$ and $uv = x_0^2 - d y_0^2$ and this implies $p = uv - 3y_0^2$ (using (\ref{eq2.4})).

Now $d-p=\frac{1}{2y_0}(u-v-2y_0uv+6y_0^3)$ implies 
$$
u-v-2y_0uv+6y^3=2y_0(2n-1)p.
$$ 
Thus using (\ref{eq2.8}) we obtain 
$$
uv-1-2y_0uv
+6y_0^3\geq 2y_0(2n-1)p>0
$$ 
and hence 
$$
uv>\frac{6y_0^3-1}{2y_0-1}.
$$ 
Therefore 
\begin{equation}\label{eq2.9}
p=uv-3y_0^2<\frac{6y_0^3-1}{2y_0-1}-3y_0^2=\frac{3}{2}y_0+\frac{\frac{3}{2}y_0-1}{2y_0-1}<2y_0 .
\end{equation} 
Since $3|n$, so $\mathbb{Q}(\sqrt{m})$ is Richaud-Degert type, and thus the fundamental unit in this field is 
$$
\epsilon=\frac{(2d^2+3)+2d\sqrt{m}}{3}.
$$ 
We now multiply (\ref{eq2.5}) with the norm of $\epsilon$, and get
$$
p=\frac{1}{9}\{(2d^2+3)x_0-2dmy_0\}^2-\{2dx_0-(2d^2+3)y_0\}^2m.
$$
By the minimality of $y_0$, we have 
$$
y_0\leq \frac{1}{9}|2dx_0-(2d^2+3)y_0|.
$$
If $y_0\leq \frac{1}{9}\{2dx_0-(2d^2+3)y_0\}$, then $(d^2+6)y_0\leq dx_0$ and thus (\ref{eq2.4}) gives 
$$
d^2 p\geq (9d^2+36)y_0^2>9y_0^2d^2.
$$
This contradicts (\ref{eq2.9}). 

Again if $y_0\leq \frac{1}{9}\{(2d^2+3)y_0-2dx_0\}$, then $d x_0 \leq (d^2-3) y_0$. Therefore (\ref{eq2.4}) implies that
$$
d^2p\leq \{(d^2-3)^2-m\}y_0^2=-9(d^2-1)y_0^2<0.
$$
This is meaningless.

Let us assume that (\ref{eq2.3}) has an integer solution(s) and $(x_0, y_0)$ is the least positive integer solution of (\ref{eq2.3}). Then $y_0 \ne 0$ such that (\ref{eq2.6}) holds.

Suppose that $p > 3 y_0^2$. Then using (\ref{eq2.6}) we have
$$
(dy_0)^2-x_0^2=p-3y_0^2>0.
$$ 
That would give  $(d y_0 + x_0)(d y_0 - x_0) > 0$. 

Let $u = d y_0 + x_0$ and $v=d y_0- x_0$. Then both $u$ and $v$ are positive integers with $(u-1)(v-1)\geq 0$. Thus 
\begin{equation}\label{eq2.10}
uv + 1\geq u + v.
\end{equation}
Furthermore $d = \frac{u+v}{2y_0}$ and $uv=dy_0^2-x_0^2$ implies $p = uv + 3y_0^2$ (using (\ref{eq2.6})) and hence
$$
d-p=\frac{1}{2y_0}(u+v-2uv y_0- 6 y_0^3).
$$ 
This implies that
$$
u + v - 2uv y_0- 6y_0^3 = 2y_0 (2n-1)p.
$$ 
Therefore using (\ref{eq2.10}) we obtain 
$$
uv+1-2uvy_0 - 6y_0^3\geq 2y_0(2n-1)p>0
$$ 
and hence $uv(1-2y_0)+(1-6y_0^3)>0$.
This contradicts to the fact that $y_0\geq 1$.  

Now we deal with the case when $p < 3y_0^2$. 

Again we multiply (\ref{eq2.7}) with the norm of the fundamental unit 
$$
\epsilon=\frac{(2d^2+3)+2d\sqrt{m}}{3}
$$
in the field $\mathbb{Q}(\sqrt{m})$, and get
$$
-p = \frac{1}{9}\{(2d^2+3)x_0-2dmy_0\}^2-\{2dx_0-(2d^2+3)y_0\}^2m.
$$
By the minimality of $y_0$, we have 
$$
9y_0\leq |2dx_0-(2d^2+3)y_0|.
$$
If $9y_0\leq \{2dx_0-(2d^2+3)y_0\}$, then $(d^2+6)y_0\leq dx_0$ and thus using (\ref{eq2.6}), we have 
$$
-d^2p\geq (9d^2+36)y_0^2>9y_0^2d^2.
$$
This is meaningless. 

Finally,  if $9y_0\leq \{(2d^2+3)y_0-2dx_0\}$, then $dx_0\leq (d^2-3)y_0$ and therefore using (\ref{eq2.6}), we obtain 
$$
-d^2p\leq \{(d^2-3)^2-m\}y_0^2=-9(d^2-1)y_0^2.
$$
Thus $d^2p\geq (9d^2-9)y_0^2\geq 8d^2y_0^2$ as $d>3$ and hence $p\geq 8y_0^2$.
This contradicts to $p<3y_0^2$. 
\end{proof}
\begin{theorem}\label{thm2.3}
Let $m=\{(2n+1)p\}^2\pm2$. 
Then 
\begin{itemize}
\item[(I)] the Diophantine equation
\begin{equation*}
x^2 - my^2 = p
\end{equation*}
has no integer solutions, and 
\item[(II)] the only positive integer solutions of
\begin{equation*}
x^2 - my^2 = -p
\end{equation*}
are $(2,1)$ and $(5,2)$. These two solutions occur only when $m=\{(2n+1)p\}^2-2$ with $p=3$.
\end{itemize}
\end{theorem}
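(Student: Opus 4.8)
The plan is to reuse the fundamental-unit/minimal-solution machinery of Theorems~\ref{thm2.1} and~\ref{thm2.2}. Put $d=(2n+1)p$, so that $m=d^2\pm2$ and $\mathbb{Q}(\sqrt m)$ is of Richaud--Degert type; its fundamental unit is then the explicit element $\epsilon=(d^2\pm1)+d\sqrt m$ of norm $+1$ (indeed $(d^2\pm1)^2-d^2m=1$ in either case). For each of the two equations I would assume a solution with least $y_0\ge1$, pass to the norm form, and multiply $x_0-y_0\sqrt m$ by $\epsilon$; the product is $\big((d^2\pm1)x_0-dmy_0\big)+\big(dx_0-(d^2\pm1)y_0\big)\sqrt m$, again of norm $\pm p$. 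Minimality of $y_0$ then forces $|dx_0-(d^2\pm1)y_0|\ge y_0$, which splits into two sub-cases that I translate into a comparison of $x_0$ with $dy_0$.

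For part (I) one sub-case is immediately incompatible with $x_0^2=my_0^2+p$ on size grounds and is discarded. In the surviving sub-case I write $x_0=dy_0+t$ with $t\ge0$ and substitute into $x_0^2-my_0^2=p$, obtaining $p=2dy_0t+t^2\mp2y_0^2$; since $p\mid d$ this rearranges to $p\big(1-2(2n+1)y_0t\big)=t^2\mp2y_0^2$, whose left-hand side is negative as soon as $t\ge1$ (because $n\ge1$). For $m=d^2-2$ this contradicts the positive right-hand side outright, the value $t=0$ being ruled out since it forces $p=2y_0^2$ (possible only when $p=2$); for $m=d^2+2$ I combine the bound $y_0^2>\tfrac52p$ coming from this relation with the case inequality $p>2y_0^2$ to reach a contradiction.

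Part (II) proceeds the same way, but the opposite sign of the norm interchanges the roles of the sub-cases. For $m=d^2+2$ the surviving sub-case gives $x_0=dy_0-s$ and $s(2dy_0-s)=p-2y_0^2$; as $s\ge1$ for odd $p$ and $2dy_0-s>dy_0\ge3p$, the right side exceeds $3p$, whence $-2y_0^2>2p$, which is absurd. The genuinely different case is $m=d^2-2$, where solutions do occur. Here the size estimate supplies a lower bound $p\ge\frac{2m}{d^2}y_0^2\ge\frac{14}{9}y_0^2$, while writing $x_0=dy_0-s$ yields the relation $p=\dfrac{s^2+2y_0^2}{2(2n+1)y_0s-1}$. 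Feeding $x_0\ge1$ back in bounds $s^2\le2y_0^2+1$, hence $p\le\dfrac{4y_0^2+1}{2y_0-1}$; the two bounds on $p$ are compatible only when $y_0=1$, which forces $s=1$ and then $p(4n+1)=3$, i.e. $p=3$, $(2n+1)p=3$, $m=7$, with least solution $(2,1)$. The second listed solution $(5,2)$ then appears as the other fundamental representative of its $\epsilon$-orbit.

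The hard part is exactly this last case: there is no clean contradiction to be had, so the fundamental-unit inequality must be upgraded to an honest two-sided bound on $y_0$, after which the argument is finished by inspection of $y_0=1$. Two further points should be handled carefully in the write-up. First, parity: the prime $p=2$ produces the sporadic solution $(d,1)$ of $x^2-my^2=\pm2$ in the family $m=d^2-2$, so the non-solvability statements and the uniqueness in (II) must be read for odd $p$. Second, the exceptional solutions satisfy $(2n+1)p=3$ and hence sit at $n=0$, at the edge of the running convention $n\ge1$; equivalently, for every $n\ge1$ the equation $x^2-my^2=-p$ has no solution at all, and since $\epsilon$ has norm $+1$ the solution set of $x^2-7y^2=-3$ is in fact infinite, so $(2,1)$ and $(5,2)$ are to be understood as its fundamental solutions.
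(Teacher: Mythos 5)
Your overall strategy is the paper's (pass to the minimal solution, multiply the norm form by the explicit Richaud--Degert fundamental unit $\epsilon=(d^2\pm1)+d\sqrt m$, and exploit minimality of $y_0$), but your execution is genuinely different and in several respects cleaner. Where the paper alternates between the unit argument and a separate factorization trick ($u=x_0\pm dy_0$, $v=\dots$, $(u\pm1)(v\mp1)\ge0$) governed by case splits such as ``$p>2y_0^2$'' versus ``$p<2y_0^2$'', you run everything through the single substitution $x_0=dy_0\pm t$ and the divisibility $p\mid d$, which turns each sub-case into a sign statement about $p\bigl(1-2(2n+1)y_0t\bigr)=t^2\mp2y_0^2$. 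This buys you two things the paper does not have: you see exactly where the argument degenerates (the paper's dichotomies silently omit $p=2y_0^2$, which is precisely the sporadic solution $(d,1)$ at $p=2$, so the theorem as stated is in fact false for $p=2$ in two of the four cases), and you correctly locate the exceptional solutions at $(2n+1)p=3$, i.e.\ $n=0$, outside the paper's running convention $n\ge1$; your observation that $N(\epsilon)=+1$ makes the solution set of $x^2-7y^2=-3$ infinite, so that ``only solutions'' must mean ``fundamental solutions'', is also a correct criticism of the statement that the paper's proof (which only ever examines a minimal solution) quietly glosses over.

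Two steps need shoring up before this is a proof. First, in the decisive case $m=d^2-2$, equation $x^2-my^2=-p$, you assert $s^2\le 2y_0^2+1$ ``from $x_0\ge1$''; that inequality is not a consequence of $x_0\ge1$ alone. It does hold, but you must also invoke $p\mid d$ (hence $p\le d$): from $s(dy_0+x_0)=s(2dy_0-s)=p+2y_0^2$ and $dy_0+x_0>dy_0$ one gets $s<\tfrac{p}{dy_0}+\tfrac{2y_0}{d}\le\tfrac{1}{y_0}+\tfrac{2y_0}{3}$, which gives $s\le y_0$ for $y_0\ge 2$ and $s=1$ for $y_0=1$, and only then does $s^2\le 2y_0^2+1$ (and the ensuing bound $p\le\frac{4y_0^2+1}{2y_0-1}$) follow. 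Second, your minimality argument pins down only the solution of globally least $y_0$, namely $(2,1)$; the second listed solution $(5,2)$ is the least solution of the \emph{other} class of $x^2-7y^2=-3$ (the two classes correspond to the two primes above $3$ and are not related by multiplication by $\epsilon$), so ``the other fundamental representative of its $\epsilon$-orbit'' is not quite right and you need a separate short argument (or the paper's explicit check at $y_0=2$) to certify $(5,2)$ and to rule out further fundamental solutions. With those two repairs your route goes through and is, in my view, tighter than the published one.
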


\begin{proof} Let $d = (2n+1)p$.

(I)  Let us suppose that (\ref{eq2.2}) has solution $(x, y)$ in integers and that $(x_0, y_0)$ be the least positive integer solution of (\ref{eq2.2}). Then we obtain (\ref{eq2.4}) for $m = d^2\pm 2$. 
We consider the two cases separately:\\
Case-$(i):\ \ d^2+2,$ \\
Case-$(ii):\ \ d^2-2$.\\
Case-$(i)$: Let us consider $p>y_0$. Then (\ref{eq2.4}) implies 
$x_0^2-my_0^2>y_0$ and hence $x_0^2-(d y_0)^2>y_0+2y_0^2$. This gives 
$$
(x_0+dy_0)(x_0-dy_0)>0.
$$
Let  $u = x_0 + d y_0$ and $v = x_0 - d y_0$ and so both $u$ and $v$ are positive integers satisfying (\ref{eq2.8}).

Furthermore, $d= \frac{u-v}{2y_0}$ and $uv=x_0^2-d^2y_0^2$  give (using (\ref{eq2.4})) $p=uv-2y_0^2$. Therefore
$$
d-p =\frac{1}{2y_0}(u - v - 2uv y_0 + 4y_0^3).
$$ 
Thus
$$
u-v-2uv y_0 + 4y_0^3=4np y_0\geq 0.
$$
Hence using (\ref{eq2.8}), we obtain 
$$
\frac{4y_0^3-1}{2y_0-1}\geq uv.
$$
Therefore
$$
p=uv-2y_0^2\leq \frac{4y_0^3-1}{2y_0-1}-2y_0^2=y_0+\frac{y_0-1}{2y_0-1}
$$
and hence $p\leq y_0$ as $\frac{y_0-1}{2y_0-1}<1$. This contradicts our assumption. 

Now let $p\leq y_0$. As before we 
multiply (\ref{eq2.5}) with the norm of the fundamental unit
$$
\epsilon=(d^2+1)+d\sqrt{m}
$$ 
in field $\mathbb{Q}(\sqrt{m})$ and get,
$$
p=\{(d^2+1)x_0-dy_0m\}^2-\{x_0d-(d^2+1)y_0\}^2m.
$$
Since $y_0$ is the least value that satisfies (\ref{eq2.2}), we have 
$$y_0\leq |x_0d-(d^2+1)y_0|.$$ 
If $y_0\leq x_0d-(d^2+1)y_0$, then $my_0\leq dx_0$ and thus using (\ref{eq2.4}), we obtain 
$$
pd^2=(x_0d)^2-m(y_0d)^2\geq m(m-d^2)y_0^2=2my_0^2\geq 2mp^2
$$
as $p\leq y_0$. This implies $p\leq \frac{d^2}{2m}<\frac{1}{2}$ as $m = d^2+2$. This is not possible and hence in this case (\ref{eq2.2}) has no integer solutions. 

Case-$(ii)$: Now $p>2y_0^2$. Then using (\ref{eq2.4}) we see that 
$$(x_0+dy_0)(x_0-dy_0)=p-2y_0^2>0.$$
Let $u=x_0+ d y_0$ and $v= x_0 - d y_0$ and thus both $u$ and $v$ are positive integers  satisfying (\ref{eq2.8}). 

Furthermore $d=\frac{u-v}{2y_0}$ and $uv=x_0^2-d^2y_0^2$ which implies (using (\ref{eq2.4})) $p=uv+2y_0^2$. Therefore 
$$
d-p=\frac{1}{2y_0}(u - v- 2uvy_0-4y_0^3)
$$ 
which implies 
$$
u-v-2uv y_0- 4y_0^3 = 4np y_0\geq 0.
$$
Hence using (\ref{eq2.8}), 
$$
uv(1-2y_0)-(1+4y_0)\geq 0.
$$ 
This is a contradiction as $y_0>0$.

Let now $p<2y_0^2$. 
Then we multiply (\ref{eq2.5}) with the norm of the fundamental unit 
$$
\epsilon=(d^2-1)+d\sqrt{m}
$$ 
in the field $\mathbb{Q}(\sqrt{m})$, 
$$
p=\{(d^2-1)x_0-dm y_0\}^2-\{dx_0-(d^2-1)y_0\}^2m.
$$
By the minimality of $y_0$, we have 
$$
y_0\leq |dx_0-(d^2-1)y_0|.
$$
If $y_0\leq dx_0-(d^2-1)y_0$, then $d y_0\leq x_0$ and thus (\ref{eq2.4}) gives 
$$
p=x_0^2-my_0^2\geq d^2y_0^2-my_0^2=2y_0^2.
$$
This contradicts the assumption.

Again for $y_0\geq (d^2-1)y_0-dx_0$, we have
$dx_0\leq my_0$ and hence (\ref{eq2.4}) implies 
$$
d^2p=d^2x_0^2-my_0^2d^2 \leq m(m-d^2)y_0^2=-2my_0^2.
$$
This is not possible.   

(II) Let us assume that (\ref{eq2.3}) has solution in integers. Let $(x_0, y_0)$ be the least such positive integer solution. Then we obtain (\ref{eq2.6}) for $m=d^2\pm 2$. Again we handle the cases separately.\\
Case-$(a):\ \ d^2+2,$ \\
Case-$(b):\ \ d^2-2$.\\
Case-$(a)$:  If $p>2y_0^2$, then using (\ref{eq2.6}), we see that,
$(d y_0)^2-x_0^2>y_0+2y_0^2$ and hence 
$$
(x_0+dy_0)(x_0-dy_0)>0.
$$
If we set  $u=d y_0+x_0$ and $v=d y_0-x_0$ then both $u$ and $v$ are positive integers satisfying (\ref{eq2.10}). 

Additionally, $d=\frac{u+v}{2y_0}$ and $uv=-(x_0^2-d^2y_0^2)$ which implies (using (\ref{eq2.6})) that $p=uv+2y_0^2$. Therefore we have 
$$
d-p = \frac{1}{2y_0}(u+ v- 2uv y_0 - 4y_0^3).
$$ 
Thus
$$
u+v-2uv y_0-4y_0^3 = 4np y_0.
$$
Hence using (\ref{eq2.10}), we see that 
$$
4np y_0 \leq uv+1-2uvy_0-4y_0^3
=uv(1-2y_0)+(1-4y_0^3)< 0.
$$
This is not possible.
Therefore we must have $p<2y_0^2$.

We now multiply (\ref{eq2.7}) with the norm of the fundamental unit $$\epsilon=(d^2+1)+d\sqrt{m}$$ in field $\mathbb{Q}(\sqrt{m})$ and get,
$$
-p=\{(d^2+1)x_0-dy_0m\}^2-\{x_0d-(d^2+1)y_0\}^2m.
$$
By the minimality of $y_0$, we have 
$$
y_0\leq |x_0d-(d^2+1)y_0|.
$$ 
If $y_0\leq x_0d-(d^2+1)y_0$, then $my_0\leq dx_0$ and thus using (\ref{eq2.6}), we get 
$$
pd^2 = m(y_0d)^2-(x_0d)^2
\leq  m(d^2-m)y_0^2
=-2my_0^2<0.
$$
This is an impossibility.

Again if $y_0\leq (d^2+1)y_0-x_0d$, then $x_0\geq dy_0$ and hence using (\ref{eq2.6}), we obtain  
$$
p=my_0^2-x_0^2
\geq my_0^2-d^2y_0^2
=2y_0^2.
$$
This contradicts to $p<2y_0^2$. Therefore in this case (\ref{eq2.3}) has no integer solution.

Case-$(b)$:  If $p=3$ and $y_0=1$, then (\ref{eq2.6}) implies that $d=3, x_0= 2$ are the only possible values.
 Likewise, for $p=3, y_0=2$, (\ref{eq2.6}) shows that $d=3, x_0= 5$ are the only possible values. 

Further for $p=y_0=3$, (\ref{eq2.6}) gives $3d+x_0=7$ and $3d-x_0=3$ or $3d+x_0=21$ and $3d-x_0=1$. None of these cases are possible though. 

We now consider the case $p>3$ with $p\leq y_0$.

Again multiplying  (\ref{eq2.7}) with the norm of the fundamental unit 
$$
\epsilon=(d^2-1)+d\sqrt{m}
$$
in the field $\mathbb{Q}(\sqrt{m})$, we see that 
$$
-p=\{(d^2-1)x_0-dmy_0\}^2-\{dx_0-(d^2-1)y_0\}^2m.
$$
Since $y_0$ is the least value that satisfies (\ref{eq2.3}), we obtain
$$
y_0\leq |dx_0-(d^2-1)y_0|.
$$
If $y_0\leq dx_0-(d^2-1)y_0$, then $d y_0\leq x_0$ and hence (\ref{eq2.6}) gives 
$$
p = my_0^2-x_0^2
\leq (m-d^2)y_0^2
=-2y_0
< 0
$$
as $y_0>0$. 
This again gives us the required contradiction. 

Again if $y_0\leq (d^2-1)y_0-dx_0$, then $dx_0\leq my_0$ and hence (\ref{eq2.6}) implies 
$$
d^2p=md^2y_0^2-m^2y_0^2=2my_0^2\geq 2mp^2
$$ 
as $p\leq y_0$. This implies $d^2\geq 2mp=2p(d^2-2)$. This is a contradiction. 

The concluding case is  $p\geq 3$ with $p\geq y_0$. We use (\ref{eq2.6}) to get
$$
(dy_0+x_0)(dy_0-x_0)>0.
$$
Thus if we write $u=d y_0+x_0$ and $v=d y_0-x_0$, then both $u$ and $v$ are positive integers  satisfying (\ref{eq2.10}). 

Moreover $d=\frac{u+v}{2y_0}$ and $uv=-(x_0^2-d^2y_0^2)$ implies (using (\ref{eq2.6})) $p=uv-2y_0^2$. Thus 
$$
d-p=\frac{1}{2y_0}(u+v-2uv y_0+4y_0^3).
$$ 
This implies 
$$
4npy_0\leq uv+1-2uvy_0+4y_0^3
$$ 
and hence 
$$
uv\leq\frac{4y_0^3+1}{2y_0-1}.
$$ 
Therefore 
$$
p=uv-2y_0^2
\leq \frac{4y_0^3+1}{2y_0-1}-2y_0^3
=y_0+\frac{y_0+1}{2y_0-1}
\leq y_0
$$
as $y_0>3$. This contradicts to $p>y_0$. This complete the proof of Theorem \ref{thm2.3}.
\end{proof}
\section{Class number of the maximal real subfield of certain cyclotomic fields}
In this section we shall prove some results concerning the non-triviality of class groups of the maximal real subfields of certain cyclotomic fields by using the results established in \S 2. Throughout this section $m$ is a square-free integer. The following result of Yamaguchi \cite{YA} is one of the main ingredient in the proof of the rest of the results.
\begin{lemma}\label{yamaguchi}
If $\phi(m)>4$, where $\phi$ stands for the Euler function and $m>0$ is an integer, then $h(m)|\mathcal{H}(4m)$.
\end{lemma}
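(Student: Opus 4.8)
The plan is to work inside the tower $\mathbb{Q}\subseteq k_m\subseteq K_{4m}$ and to compare the two class groups through the extension and norm maps. First I would record the inclusion $k_m\subseteq K_{4m}$. Since $m$ is positive and square-free, the discriminant of $k_m=\mathbb{Q}(\sqrt m)$ divides $4m$, so $k_m\subseteq\mathbb{Q}(\zeta_{4m})$ by Kronecker--Weber; and because $m>0$ the field $k_m$ is real, hence $k_m\subseteq\mathbb{Q}(\zeta_{4m})\cap\mathbb{R}=K_{4m}$. Both $k_m/\mathbb{Q}$ and $K_{4m}/\mathbb{Q}$ are abelian, and I set $d:=[K_{4m}:k_m]=\phi(4m)/4$.

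One cannot simply quote a general tower statement, since class numbers need not divide even in abelian towers, so the specific arithmetic of these fields must be used. I would introduce the extension-of-ideals map $j\colon\mathrm{Cl}(k_m)\to\mathrm{Cl}(K_{4m})$ and the relative norm $N\colon\mathrm{Cl}(K_{4m})\to\mathrm{Cl}(k_m)$, whose composite $N\circ j$ is multiplication by $d$. For any prime $\ell\nmid d$, multiplication by $d$ is an automorphism of the $\ell$-primary part of $\mathrm{Cl}(k_m)$, so $j$ is injective there; hence the $\ell$-part of $h(m)$ divides the $\ell$-part of $\mathcal{H}(4m)$. This disposes of every prime not dividing $d=\phi(4m)/4$.

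For the remaining primes I would invoke class field theory. Writing $H$ for the Hilbert class field of $k_m$, one has $\mathrm{Cl}(k_m)/N(\mathrm{Cl}(K_{4m}))\cong\mathrm{Gal}\big((K_{4m}\cap H)/k_m\big)$. Because the genus field of $k_m$ is abelian over $\mathbb{Q}$ and, being generated by the $\sqrt{p^{*}}$ for the ramified primes $p$, is already contained in $\mathbb{Q}(\zeta_{4m})$, the intersection $K_{4m}\cap H$ coincides with the maximal totally real subfield of that genus field. Thus the failure of $N$ to be surjective is an explicit elementary $2$-group, and the argument so far yields $h(m)\mid\mathcal{H}(4m)$ up to a power of $2$.

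The main obstacle is therefore the $2$-primary (genus) part. When $k_m$ has several ramified primes the genus classes capitulate in $K_{4m}\cap H$, so $j$ fails to be injective and $N$ fails to be surjective precisely on the $2$-part, and the naive comparison loses exactly the factor I need. I expect to close this gap by a genus-theoretic $2$-rank computation: applying Chevalley's ambiguous class number formula to the cyclic steps of $K_{4m}/k_m$, I would show that every ambiguous ideal class of $k_m$ is a norm from $K_{4m}$, so that the $2$-part of $h(m)$ also divides that of $\mathcal{H}(4m)$. This is where the hypothesis $\phi(m)>4$ enters: it makes $d=\phi(4m)/4$ large enough (equivalently, it forces enough rational primes to ramify in $K_{4m}/k_m$ with suitable inertia) for the ramified-prime count in the ambiguous class number formula to absorb the genus contribution. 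Combining this with the prime-to-$d$ part obtained from the norm relation gives $h(m)\mid\mathcal{H}(4m)$.
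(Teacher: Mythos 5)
The paper offers no proof of this lemma: it is imported verbatim from Yamaguchi \cite{YA} and used as a black box, so there is no in-paper argument to compare yours against; your attempt has to be judged on its own terms. Within it, the setup is sound: $k_m\subseteq K_{4m}$, the composite $N\circ j$ is multiplication by $d$, and $\mathrm{Cl}(k_m)/N(\mathrm{Cl}(K_{4m}))\cong\mathrm{Gal}((K_{4m}\cap H)/k_m)$ is correct; moreover, since $K_{4m}\cap H$ is abelian over $\mathbb{Q}$ and unramified over $k_m$, it lies in the (wide) genus field, so the cokernel of $N$ is an elementary $2$-group and the odd part of $h(m)$ does divide $\mathcal{H}(4m)$ (which makes your separate treatment of primes $\ell\nmid d$ redundant).

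The $2$-primary step, however, is the entire content of the lemma, and in your write-up it is only announced (``I expect to close this gap\ldots''), not carried out. Worse, it cannot be carried out in the stated generality, because the genus obstruction you isolated is real and not removable. Take $m=15$, so $\phi(m)=8>4$ and $m$ is square-free. One has $h(15)=2$, and the Hilbert class field of $\mathbb{Q}(\sqrt{15})$ is its real genus field $\mathbb{Q}(\sqrt{3},\sqrt{5})$, which is already contained in $K_{60}$; hence the compositum $HK_{60}$ is just $K_{60}$ and the method yields nothing. And indeed $h(\mathbb{Q}(\zeta_{60}))=1$, so $\mathcal{H}(60)=1$ (the plus part divides the full class number), whence $h(15)\nmid\mathcal{H}(60)$. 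So no ambiguous-class-number or $2$-rank computation can show that every ambiguous class is a norm here, and the hypothesis $\phi(m)>4$ does not play the role you assign to it --- it merely excludes degenerate small $m$ for which $K_{4m}$ essentially coincides with $k_m$, and says nothing about how many primes ramify in $K_{4m}/k_m$. The conclusion to draw is that the lemma as quoted must be missing hypotheses (e.g.\ restricting the number of prime divisors of $m$ or the $2$-part of $\mathrm{Cl}(k_m)$) relative to what Yamaguchi actually proved; your proof strategy is the natural one, it correctly locates the difficulty, but the gap it leaves is a genuine counterexample rather than a technical loose end.
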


\begin{theorem}\label{thm3.1}
Let $p\equiv 1 \pmod 4$ and let $m=(2np)^2-1$ with $n\geq 1$ an integer. Then $\mathcal{H}(4m)>1$.
\end{theorem}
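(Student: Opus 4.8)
The plan is to reduce the claim to the non-solvability result of Theorem \ref{thm2.1} and then invoke Yamaguchi's Lemma \ref{yamaguchi}. As announced in the introduction, the strategy is first to establish $h(m)>1$ for the real quadratic field $k_m=\mathbb{Q}(\sqrt{m})$, and then to conclude via the divisibility $h(m)\mid\mathcal{H}(4m)$.

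First I would record the arithmetic of $m$ modulo $4$ and modulo $p$. Since $m=(2np)^2-1\equiv -1\equiv 3\pmod 4$, the ring of integers of $k_m$ is $\mathbb{Z}[\sqrt{m}]$, so every algebraic integer of $k_m$ has the shape $x+y\sqrt{m}$ with $x,y\in\mathbb{Z}$, and its norm is exactly $x^2-my^2$. This is the point that makes the argument match the norm form in Theorem \ref{thm2.1} cleanly: there are no half-integer coefficients to accommodate.

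Next I would show that $p$ splits in $k_m$. Because $m=(2np)^2-1\equiv -1\pmod p$, the Legendre symbol satisfies $\left(\frac{m}{p}\right)=\left(\frac{-1}{p}\right)$, and the hypothesis $p\equiv 1\pmod 4$ gives $\left(\frac{-1}{p}\right)=1$; note also $p\nmid m$, so $p$ is unramified. Hence $(p)=\mathfrak{p}\,\overline{\mathfrak{p}}$ with $N(\mathfrak{p})=p$. If $\mathfrak{p}$ were principal, say $\mathfrak{p}=(\alpha)$ with $\alpha=x+y\sqrt{m}\in\mathbb{Z}[\sqrt{m}]$, then taking norms would force $x^2-my^2=\pm p$, contradicting Theorem \ref{thm2.1}. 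Therefore $\mathfrak{p}$ is a non-principal ideal and $h(m)>1$.

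Finally I would check the hypothesis of Lemma \ref{yamaguchi}: for $n\geq 1$ and $p\geq 5$ (the smallest prime $\equiv 1\pmod 4$), the integer $m=(2np)^2-1$ is large enough that $\phi(m)>4$, as is easily verified. Yamaguchi's Lemma then yields $h(m)\mid\mathcal{H}(4m)$, and combined with $h(m)>1$ this gives $\mathcal{H}(4m)\geq h(m)>1$, as required. I expect no genuine obstacle here; the only step demanding care is the identification of $N(\alpha)$ with $x^2-my^2$, which relies crucially on $m\equiv 3\pmod 4$ so that the maximal order is $\mathbb{Z}[\sqrt{m}]$ rather than $\mathbb{Z}\left[\frac{1+\sqrt{m}}{2}\right]$. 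Were $m\equiv 1\pmod 4$ instead, one would be led to $x^2-my^2=\pm 4p$ and a separate non-solvability input would be needed.
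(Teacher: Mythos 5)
Your proposal is correct and follows essentially the same route as the paper: split $p$ in $k_m$ via $\left(\frac{m}{p}\right)=\left(\frac{-1}{p}\right)=1$, derive $a^2-mb^2=\pm p$ from principality of the prime above $p$, contradict Theorem \ref{thm2.1} to get $h(m)>1$, and finish with Lemma \ref{yamaguchi}. Your explicit justification that $m\equiv 3\pmod 4$ forces the maximal order to be $\mathbb{Z}[\sqrt{m}]$ (so the norm form is exactly $x^2-my^2$) is a detail the paper leaves implicit, but it is the same argument.
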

\begin{proof}
We see that $m=(2np)^2-1\equiv 3\pmod 4$.
Therefore (since $p\equiv 1\pmod 4$)
$$
\left(\frac{m}{p}\right)=\left(\frac{-1}{p}\right)=1.
$$ 
Thus  $p$ splits completely in the field $k_m$ as a product of a prime ideal $\mathfrak{P}\subseteq\mathcal{O}_{k_m}$ of degree one and its conjugate $\mathfrak{P}'$ with $N(\mathfrak{P})=p$. 

Our target is to show that $h(m) >1$ and then applying Lemma \ref{yamaguchi} we have the proof. Thus if possible let  $h(m)=1$. Then $\mathfrak{P}$ is principal in $\mathcal{O}_{k_m}$ and hence $\mathfrak{P}$ can be written as $\mathfrak{P}=(a+b\sqrt{m})$ for some $a, b\in\mathbb{Z}$.

Now $p=N(\mathfrak{P})=N(a+b\sqrt{m})$ which implies 
$$
p=|a^2-mb^2|,
$$ 
that is 
$$
a^2-mb^2=\pm p.
$$ 
This shows that $(a, b)$ is an integer solution of (\ref{eq2.1}) which contradicts Theorem \ref{thm2.1}. Therefore $h(m)>1$. 

Now $\phi(m)>4$ and hence by Lemma \ref{yamaguchi}, we complete the proof. 
\end{proof}
Along the same lines using Theorem \ref{thm2.2} and Lemma \ref{yamaguchi} we have:
\begin{theorem}\label{thm3.2}
Let $p\equiv \pm 1\pmod 4$ be a prime and let $m=(2np)^2+3$ with $n$ a positive integer and a multiple of $3$. Then $\mathcal{H}(4m)>1$.
\end{theorem}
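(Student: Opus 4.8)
The plan is to mirror the proof of Theorem \ref{thm3.1} verbatim in structure: reduce the claim $\mathcal{H}(4m) > 1$ to the statement $h(m) > 1$ through Lemma \ref{yamaguchi}, and establish $h(m) > 1$ by showing that $h(m) = 1$ would force $x^2 - my^2 = \pm p$ to have an integer solution, in direct contradiction to Theorem \ref{thm2.2}.

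First I would record the arithmetic of $m$. Writing $d = 2np$, we have $m = d^2 + 3 \equiv 3 \pmod 4$ since $d$ is even, so the ring of integers of $k_m = \mathbb{Q}(\sqrt{m})$ is $\mathbb{Z}[\sqrt{m}]$ and an odd prime $q \nmid m$ splits precisely when $\left(\frac{m}{q}\right) = 1$. Because $3 \mid n$, one also has $3 \mid m$, so $3$ ramifies in $k_m$; this observation will let the prime $p = 3$ be handled by ramification instead of splitting.

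The central step is to exhibit a prime ideal of norm $p$ in $\mathcal{O}_{k_m}$. For $p \neq 3$ I would evaluate the Legendre symbol: since $m = d^2 + 3 \equiv 3 \pmod p$,
$$
\left(\frac{m}{p}\right) = \left(\frac{3}{p}\right),
$$
and the congruence hypothesis on $p$ is exactly what is needed to force $\left(\frac{3}{p}\right) = 1$, whence $p$ splits as $(p) = \mathfrak{P}\mathfrak{P}'$ with $N(\mathfrak{P}) = p$. When $p = 3$ the same conclusion—a prime ideal $\mathfrak{P}$ of norm $p$—is furnished instead by the ramification $(3) = \mathfrak{P}^2$. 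In either case, assuming $h(m) = 1$ makes $\mathfrak{P}$ principal, say $\mathfrak{P} = (a + b\sqrt{m})$ with $a, b \in \mathbb{Z}$, so that $p = N(\mathfrak{P}) = |a^2 - mb^2|$ and therefore $a^2 - mb^2 = \pm p$. Since $m = (2np)^2 + 3$ with $3 \mid n$, this contradicts Theorem \ref{thm2.2}, and we conclude $h(m) > 1$.

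Finally I would verify $\phi(m) > 4$, which is immediate because $m = (2np)^2 + 3$ is large (here $n \geq 3$), and then invoke Lemma \ref{yamaguchi}: from $h(m) \mid \mathcal{H}(4m)$ together with $h(m) > 1$ we obtain $\mathcal{H}(4m) > 1$. I expect the one delicate point to be the evaluation of $\left(\frac{3}{p}\right)$, the sole place where the congruence hypothesis on $p$ is used. By quadratic reciprocity this amounts to locating $p$ modulo $12$, and I would take care to confirm that the stated congruence class of $p$ genuinely yields $\left(\frac{3}{p}\right) = 1$, treating $p = 3$ separately through ramification as above.
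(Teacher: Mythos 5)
Your approach is the one the paper intends (the paper gives no details for Theorem \ref{thm3.2}, saying only that it goes ``along the same lines'' as Theorem \ref{thm3.1}), but the single step you defer --- the evaluation of $\left(\frac{3}{p}\right)$ --- is exactly where the argument breaks, and it does not survive the verification you promise. The hypothesis $p\equiv\pm1\pmod 4$ is satisfied by every odd prime and carries no information about $\left(\frac{3}{p}\right)$: by quadratic reciprocity, $\left(\frac{3}{p}\right)=1$ if and only if $p\equiv\pm1\pmod{12}$. For $p\equiv\pm5\pmod{12}$ --- for instance $p=5$, $n=3$, $m=903$, where $\left(\frac{3}{5}\right)=-1$ --- the prime $p$ is inert in $k_m$, there is no ideal of norm $p$ at all, and the reduction to Theorem \ref{thm2.2} cannot even be set up. So your proposal establishes the theorem only under the stronger hypothesis $p\equiv\pm1\pmod{12}$ (which, judging by the exactly parallel conditions in Theorems \ref{thm3.1}, \ref{thm3.3} and \ref{thm3.4}, is presumably what was meant); it does not prove the statement as printed, and no argument of this shape can, since the accompanying table includes inert primes such as $p=5,7,17,19$.

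A second, smaller point concerns $p=3$. Your treatment via ramification is formally acceptable if Theorem \ref{thm2.2} is taken on faith, but observe that $(x,y)=(2np,1)$ satisfies $x^2-my^2=(2np)^2-m=-3$, so for $p=3$ the equation $x^2-my^2=-p$ genuinely has an integer solution and Theorem \ref{thm2.2} fails there (its proof overlooks the boundary case $p=3y_0^2$). Concretely, the ramified prime $\mathfrak{P}=(3,\sqrt m)$ equals $(2np+\sqrt m)$ and is principal for every such $m$, so its principality yields no contradiction and no information about $h(m)$. The case $p=3$ therefore requires a different argument or should be excluded from the statement.
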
  
Finally
using Theorem \ref{thm2.3} and Lemma \ref{yamaguchi} we have:
\begin{theorem}\label{thm3.3}
If $p\equiv \pm 1\pmod 8$ is a prime and $n$ is a positive integer, then $\mathcal{H}(4\{((2n+1)p)^2+2\})>1$.
\end{theorem}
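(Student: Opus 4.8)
The plan is to mirror the argument of Theorem~\ref{thm3.1}: first establish $h(m)>1$ by contradiction using Theorem~\ref{thm2.3}, and then invoke Lemma~\ref{yamaguchi} to transfer this conclusion to $\mathcal{H}(4m)$. Write $d=(2n+1)p$, so that $m=d^2+2$. Since $d$ is odd, $d^2\equiv 1\pmod 8$ and hence $m\equiv 3\pmod 4$; recalling the standing assumption that $m$ is square-free, this gives $\mathcal{O}_{k_m}=\mathbb{Z}[\sqrt{m}]$. This last fact is what allows a principal prime to be translated into an \emph{integer} solution of $x^2-my^2=\pm p$, since every element of $\mathcal{O}_{k_m}$ then has the shape $a+b\sqrt{m}$ with $a,b\in\mathbb{Z}$.

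The first key step is to show that $p$ splits in $k_m$. Because $d\equiv 0\pmod p$, we have $m=d^2+2\equiv 2\pmod p$, so
$$
\left(\frac{m}{p}\right)=\left(\frac{2}{p}\right)=1,
$$
the last equality holding precisely because $p\equiv\pm1\pmod 8$; this is exactly the point at which the congruence hypothesis on $p$ is consumed. Consequently $p\mathcal{O}_{k_m}=\mathfrak{P}\mathfrak{P}'$ with $\mathfrak{P}$ a degree-one prime satisfying $N(\mathfrak{P})=p$.

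Next I would argue by contradiction. Assuming $h(m)=1$ forces $\mathfrak{P}$ to be principal, say $\mathfrak{P}=(a+b\sqrt{m})$, whence $p=|N(a+b\sqrt{m})|=|a^2-mb^2|$ and therefore $a^2-mb^2=\pm p$. This produces an integer solution of (\ref{eq2.1}) for $m=d^2+2$. Now Theorem~\ref{thm2.3}(I) rules out the $+p$ possibility, while Theorem~\ref{thm2.3}(II) rules out the $-p$ possibility: the only admissible solutions of $x^2-my^2=-p$ occur in the $m=d^2-2$, $p=3$ case, so for our $m=d^2+2$ there are none. Both signs are thus impossible, contradicting $h(m)=1$; hence $h(m)>1$.

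Finally, since $p\geq 7$ and $n\geq 1$ force $m=d^2+2\geq 443$, we certainly have $\phi(m)>4$, and Lemma~\ref{yamaguchi} yields $h(m)\mid\mathcal{H}(4m)$. Combined with $h(m)>1$ this gives $\mathcal{H}(4m)>1$, as required. The step I expect to be the main obstacle — really the only delicate point — is ensuring that the correct branch of Theorem~\ref{thm2.3} is applied: because $m$ here is of the $+2$ type, I must confirm that the exceptional solutions $(2,1)$ and $(5,2)$ of part (II) do not intrude (they belong to the $-2$, $p=3$ case), so that \emph{both} $x^2-my^2=p$ and $x^2-my^2=-p$ are genuinely unsolvable and the contradiction is airtight.
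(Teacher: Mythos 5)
Your proof is correct and follows exactly the route the paper intends: the paper gives no separate argument for Theorem~\ref{thm3.3}, merely stating that it follows ``along the same lines'' as Theorem~\ref{thm3.1} from Theorem~\ref{thm2.3} and Lemma~\ref{yamaguchi}, which is precisely what you carry out (with the correct use of $\left(\frac{2}{p}\right)=1$ for $p\equiv\pm1\pmod 8$ and the correct observation that the exceptional solutions of part (II) live only in the $d^2-2$, $p=3$ case).
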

\begin{theorem}\label{thm3.4}
If $p\equiv 1, 3\pmod 8$ is a prime with $p\ne 3$ and $n$ is a positive integer, then $\mathcal{H}(4\{((2n+1)p)^2-2\})>1$.
\end{theorem}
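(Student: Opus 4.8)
The plan is to follow the template of Theorem \ref{thm3.1}: reduce, via Lemma \ref{yamaguchi}, to proving $h(m)>1$ for $m=\{(2n+1)p\}^2-2$, and then extract a contradiction from the hypothesis $h(m)=1$ using the non-solvability statements of Theorem \ref{thm2.3}. Write $d=(2n+1)p$, so that $m=d^2-2$; since $p$ is odd, $d$ is odd, whence $d^2\equiv 1\pmod 8$ and $m\equiv 3\pmod 4$. Consequently $\mathcal{O}_{k_m}=\mathbb{Z}[\sqrt m]$ and every element of $k_m$ generating an ideal has the shape $a+b\sqrt m$ with $a,b\in\mathbb{Z}$, exactly as in the proof of Theorem \ref{thm3.1}. (As throughout \S 3, I take $m$ to be square-free.)

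First I would pin down the splitting of $p$ in $k_m$. Since $m=d^2-2\equiv -2\pmod p$ and $p\nmid m$, the prime $p$ is unramified and its splitting is governed by
$$
\left(\frac{m}{p}\right)=\left(\frac{-2}{p}\right)=\left(\frac{-1}{p}\right)\left(\frac{2}{p}\right).
$$
The two factors equal $+1$ precisely when $p\equiv 1\pmod 4$ and $p\equiv \pm1\pmod 8$ respectively, and a short case check on $p \bmod 8$ shows their product is $+1$ exactly for $p\equiv 1,3\pmod 8$, which is our hypothesis. Hence $\left(\frac{m}{p}\right)=1$, so $p$ splits completely in $k_m$ as $p\mathcal{O}_{k_m}=\mathfrak{P}\mathfrak{P}'$ with $\mathfrak{P}$ of degree one and $N(\mathfrak{P})=p$.

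Next, assume for contradiction that $h(m)=1$. Then $\mathfrak{P}$ is principal, say $\mathfrak{P}=(a+b\sqrt m)$, and taking norms gives $|a^2-mb^2|=p$, that is $a^2-mb^2=\pm p$; thus $(a,b)$ is an integer solution of (\ref{eq2.1}) for $m=d^2-2$. Theorem \ref{thm2.3}(I) forbids the value $+p$, while Theorem \ref{thm2.3}(II) permits the value $-p$ only through the solutions $(2,1)$ and $(5,2)$, and these arise solely when $p=3$. Since $p\ne 3$ by hypothesis, neither sign is attainable, contradicting the existence of $(a,b)$. Therefore $h(m)>1$. Finally, as $n\geq 1$ forces $m\geq (3p)^2-2$ (the smallest admissible prime being $p=11$), we certainly have $\phi(m)>4$, so Lemma \ref{yamaguchi} yields $h(m)\mid \mathcal{H}(4m)$ and hence $\mathcal{H}(4m)\geq h(m)>1$.

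I expect the only genuine subtleties to be twofold, and both are essentially inherited from Theorem \ref{thm2.3}. The first is the quadratic-residue bookkeeping that forces the congruence $p\equiv 1,3\pmod 8$, since it is precisely this congruence that makes $p$ split and thereby produces a norm-$p$ ideal to exploit. The second is the necessity of excluding $p=3$: this is exactly the value at which the exceptional solutions $(2,1),(5,2)$ of Theorem \ref{thm2.3}(II) reappear and would otherwise invalidate the descent. The remaining steps are routine and identical in spirit to the proof of Theorem \ref{thm3.1}.
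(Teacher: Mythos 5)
Your proposal is correct and follows exactly the route the paper intends: the paper omits the proof of Theorem \ref{thm3.4}, stating only that it follows ``using Theorem \ref{thm2.3} and Lemma \ref{yamaguchi}'' along the same lines as Theorem \ref{thm3.1}, and your argument is precisely that template, with the correct computation $\left(\frac{m}{p}\right)=\left(\frac{-2}{p}\right)=1$ for $p\equiv 1,3\pmod 8$ and the correct use of $p\ne 3$ to rule out the exceptional solutions $(2,1)$ and $(5,2)$ of Theorem \ref{thm2.3}(II).
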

\begin{remark}
The condition `square-free' on $m$ is not necessary to have non-trivial class number. For example, in Table 3, there is one $m$ (see, $*$ mark) with square part for which $\mathcal{H}(4m)>1$.
\end{remark}
\section{Numerical examples}
In this section we give some numerical examples corroborating  our results in \S 3. It is sufficient to compute the class numbers of each of the families of underlying real quadratic fields, i.e. $h(m)$'s. We compute these class numbers for  small values of $m$ and list them in the Tables below. 

\begin{table}[ht]
 \centering
\begin{tabular}{ | c  c  c c |c c c c|} 
 \hline
 $p$ & $n$ & $m=(2np)^2-1$ & $h(m)$ & $p$ & $n$ & $m=(2np)^2-1$ & $h(m)$\\
\hline
5&2&399&8&5&3&899&6\\\hline
5&4&1599&12&5&6&3599&10\\\hline
5& 7& 4899&16& 13&2&2703&12\\\hline
13&3&6083&8&13&4&10815&16\\\hline
13&5&16899&40&17&1&1155&8\\\hline
17&2&4623&16&17&3&10403&14\\\hline
17&4&18495&12&29&1&3363&8\\\hline
29&4&53823&40&37&3&49283&24\\\hline
37&6&197135&72&41&2&26895&32\\\hline
41&4&26895&32&41&5&168099&72\\\hline
53&1&11235&24&53&2&44943&20\\\hline
53&3&101123&36&59&2&55695&32\\
\hline
\end{tabular}
\caption{Numerical examples of Theorem \ref{thm3.1}. }
\end{table}

\begin{table}[ht]
 \centering
\begin{tabular}{ | c  c c c |c c c c|} 
 \hline
 $p$ & $n$ & $m=(2np)^2+3$ & $h(m)$ & $p$ & $n$ & $m=(2np)^2+3$ & $h(m)$\\
\hline
3&3&327&2&3&6&1299&8\\\hline
3&9&2919&8&5&3&903&4\\\hline
5&6&3603&4&5&9&8103&8\\\hline
7&3&1767&4&7&6&7059&8\\\hline
7&9&15879&12&11&3&4359&10\\\hline
11&6&174427&16&11&9&39207&16\\\hline
13&3&6087&10&13&6&24339&16\\\hline
13&9&54759&30&17&3&10407&6\\\hline
19&3&1299&16&19&6&51987&16\\\hline
19&9&116967&24&29&3&30279&18\\\hline
29&6&121107&24&29&9&272487&24\\\hline
31&3&34599&20&31&6&138387&24\\\hline
31&9&311367&36&37&3&49287&20\\\hline
37&6&197139&32&37&9&443559&78\\ \hline
41&3&60519&38&41&6&242067&36\\

 \hline
\end{tabular}
\caption{Numerical examples of Theorem \ref{thm3.2}. }
\end{table}

\begin{table}[ht]
 \centering
\begin{tabular}{ | c  c  c  c |c c c  c | } 
 \hline
 $p$ & $n$ & $m=\{(2n+1)p\}^2+2$ & $h(m)$ & $p$ & $n$ & $m=\{(2n+1)p\}^2+2$ & $h(m)$\\
\hline
7&1&443&3&7&2&1227&4\\\hline
17&1&2603&4&17&2&7227*&2\\\hline
17&3&14163&10&17&4&23411&10\\\hline
17&5&34971&18&23&1&4763&4\\\hline
23&2&13227&10&23&3&25923&16\\\hline
23&4&42851&20&23&5&64011&24\\\hline
31&1&9218&6&31&2&24027&10\\\hline
31&3&47091&32&31&4&77843&12\\\hline
31&5&116283&16&41&1&15131&15\\\hline
41&2&42027&10&41&3&82371&44\\\hline
41&4&136163&21&41&5&203403&24\\\hline
47&1&19883&6&47&2&55227&20\\\hline
47&4&178931&33&71&1&45371&22\\\hline
71&3&247011&44&71&4&408323&28\\\hline
71&5&609963&58&73&1&47963&9\\\hline
73&3&261123&38&73&4&431651&52\\

 \hline
\end{tabular}

\caption{Numerical examples of Theorem \ref{thm3.3}.}
\end{table}
\begin{table}[ht]
 \centering
\begin{tabular}{ | c  c  c  c |c c c c|} 
 \hline
 $p$ & $n$ & $m=\{(2n+1)p\}^2-2$ & $h(m)$ & $p$ & $n$ & $m=\{(2n+1)p\}^2-2$ & $h(m)$\\
\hline
11&1&1087&7&11&2&3023&3\\ \hline
11&3&5927&5&11&4&9799&18\\ \hline
11&5&14639&17&17&1&2599&14\\\hline
17&2&7223&4&17&3&14159&9\\ \hline
17&4&23407&16&17&5&34967&16\\\hline
19&1&3247&8&19&2&9023&8\\\hline
19&3&17687&6&19&4&29239&34\\\hline
41&1&15127&10&41&2&42023&15\\\hline
41&3&82367&14&41&4&136159&78\\\hline
43&1&16639&24&43&2&46223&16\\\hline
43&3&90599&19&43&4&149767&39\\\hline
43&5&223727&24&59&1&31327&27\\\hline
59&2&87023&12&59&3&170567&16\\\hline
59&4&281959&55&59&5&42119&66\\\hline
73&1&47959&42&73&2&133223&14\\\hline
73&3&261119&38&73&4&431647&46\\

 \hline
\end{tabular}
\caption{Numerical examples of Theorem \ref{thm3.4}. }
\end{table}
\vspace*{6cm}
\section*{Acknowledgement} The authors are indebted to the anonymous referee for his/her valuable suggestions which has helped improving the presentation of this manuscript.

\end{document}